\newtheorem{theorem}{Theorem}[section]
\newtheorem*{theorem*}{Theorem}
\newtheorem{lemma}[theorem]{Lemma}
\theoremstyle{definition}
\newtheoremstyle{fact}% name
  {.5em}% space above
  {.5em}% space below
  {\itshape}% body font
  {}% indent amount
  {\itshape}% theorem head font
  {.}% punctuation after theorem head
  {.5em}% space after theorem head
  {\thmname{#1}\thmnumber{ #2}\thmnote{ (#3)}}% theorem head spec
\theoremstyle{fact}
\providecommand{\customgenericname}{}
\newcommand{\newcustomtheorem}[2]{%
	\newenvironment{#1}[1]
	{%
		\renewcommand\customgenericname{#2}%
		\renewcommand\theinnercustomgeneric{##1}%
		\innercustomgeneric
	}
	{\endinnercustomgeneric}
}
\begin{document}

\title{Quantitative distortion and the Hausdorff dimension of continued fractions}

\author{Daniel Ingebretson}

\begin{abstract}
We prove a quantitative distortion theorem for iterated function systems that generate sets of continued fractions. 
As a consequence, we obtain upper and lower bounds on the Hausdorff dimension of any set of real or complex continued fractions.
These bounds are solutions to Moran-type equations in the convergents that can be easily implemented in a computer algebra system.
\end{abstract}

\maketitle 

\section{Introduction}
Let $ \mathbb{N} \times \mathbb{Z}i $ be the set of Gaussian integers whose first coordinate is positive, and let $ I \subset \mathbb{N} \times \mathbb{Z}i $.
Each right-infinite word $ \omega = (\omega_1, \omega_2, \ldots) \in I^{\infty} $ corresponds to an infinite complex continued fraction
$$
x = \cfrac{1}{\omega_1+\cfrac{1}{\omega_2+\cfrac{1}{\ddots}}}.
$$
The set $ J_I \subset \mathbb{C} $ of continued fractions formed in this way has zero Lebesgue measure in $ \mathbb{C} $.
For $ I = \mathbb{N} \times \mathbb{Z}i $, a number of authors have estimated its Hausdorff dimension (see \cite{GM}, \cite{MU}, \cite{Pri}). 
Recently Falk and Nussbaum \cite{FN} estimated $ \dim_H(J_I) \simeq 1.855 $ up to some error in the algorithm.

If $ I $ is a proper subset of $ \mathbb{N} $ the continued fractions are real, and $ J_I \subset (0,1) \setminus \mathbb{Q} $.
For finite $ I $, estimates on the Hausdorff dimension of $ J_I $ go back as far as 1941 \cite{Goo}, and continue to the present (see \cite{Cus}, \cite{Bum}, \cite{Hen}, \cite{Hen1}, \cite{Hen2}).
The current state-of-the-art is the Jenkinson-Pollicott algorithm \cite{JP}, which can calculate the dimension of $ J_{\{1,2\}} $ accurate to one hundred decimal places \cite{JP1}.
For infinite $ I $, the Falk-Nussbaum algorithm can be adapted to provide good estimates \cite{CLU}.

Virtually all the literature on dimension theory for continued fractions uses technical methods to approximate the spectrum of certain transfer operators defined on suitable spaces of functions in the complex plane.
The Falk-Nussbaum algorithm uses piecewise polynomial interpolation, and the Jenkinson-Pollicott algorithm uses heavy functional analytic machinery, including the Grothendieck theory of nuclear operators.

In this paper, we give a simple alternative method for estimating the Hausdorff dimension of complex or real continued fractions.
To state the theorem, we first fix some notation.
Let $ I^{\ast} $ be the set of finite words on the alphabet $ I $.
Each $ \omega \in I^{\ast} $ defines a finite continued fraction
\begin{equation}
\label{aomega}
a_{\omega} = \cfrac{1}{\omega_1+\cfrac{1}{\omega_2+\cfrac{1}{\ddots +\cfrac{1}{\omega_n}}}}
\end{equation}
which can be uniquely expressed as a quotient of relatively prime Gaussian integers
\begin{equation}
\label{qomega}
a_{\omega} = \frac{p_{\omega}}{q_{\omega}}.
\end{equation}

\begin{customthm}{1}
	\label{thm1}
	Let $ J_I $ be a set of (real or complex) continued fractions.
	For all $ k \geq 1 $,
	$$
	T_k^- < \dim_H(J_I) < T_k^+,
	$$
	where $ t=T_k^{\pm} $ are the unique solutions to the equations
	$$
	\sum_{\omega \in I^k} q_{\omega}^{-t}(1+a_{\omega})^{\pm 2t} = 1.
	$$
	Furthermore, $ T_k^{\pm} $ are monotonic in $ k $, and as $ k \to \infty $,
	$$ 
	|T_k^{\pm} - \dim_H(J_I)| = O\left(\frac{1}{k}\right). 
	$$
\end{customthm}

Mauldin and Urba\'{n}ski \cite{MU}, \cite{MU1} studied continued fractions as limit sets of iterated function system (IFS) generated by maps $ \phi_b(z) = (b+z)^{-1} $ defined on a closed disk $ D \subset \mathbb{C} $.
For each finite word $ \omega \in I^n $ we have a composition
$$
\phi_{\omega} = \phi_{\omega_1} \circ \cdots \circ \phi_{\omega_n},
$$
and if there exists $ K \geq 1 $ such that for all $ \omega \in I^{\ast} $ and $ z, w \in D $,
$$
K^{-1} \leq \frac{|\phi'_{\omega}(z)|}{|\phi'_{\omega}(w)|} \leq K,
$$
we say the IFS has \textit{bounded distortion}.
Mauldin and Urba\'{n}ski \cite{MU1} showed that $ K=4 $ suffices for the continued fraction IFS.
To prove Theorem \ref{thm1}, we upgrade this to a quantitative version.
First, some terminology from Sullivan \cite{Sul}: if $ \omega = (\omega_1, \ldots, \omega_n) $ is a finite word, we denote its \textit{dual} by
$$
\widetilde{\omega} = (\omega_n, \ldots, \omega_1).
$$

\begin{customthm}{2}[Quantitative distortion]
	\label{thm2}
	Let $ \{ \phi_b : D \rightarrow D \}_{b \in I} $ be a continued fraction IFS indexed by $ I \subset \mathbb{N} \times \mathbb{Z}i $.
	For any $ \omega \in I^{\ast} $ and all $ z,w \in D $,
	$$
	(1+\phi_{\widetilde{\omega}}(0))^{-2} \leq \frac{|\phi'_{\omega}(z)|}{|\phi'_{\omega}(w)|} \leq (1+\phi_{\widetilde{\omega}}(0))^2,
	$$
	and the inequalities are strict unless $ z=0 $ and $ w=1 $ or vice versa.
\end{customthm}

This suggests a method to generalize Theorem \ref{thm1} to other IFS; first establish a quantitative distortion result, and use this to estimate the dimension. 
The tighter the bounds on distortion, the better the dimension estimates.

\subsection{Organization of the paper}
In Section \ref{IFS}, we summarize the IFS theory and its connection to continued fractions.
In Section \ref{quantdist}, we prove Theorem \ref{thm2}.
In Section \ref{Hbounds}, we prove Theorem \ref{thm1} and implement the algorithm in Mathematica to obtain explicit numerical bounds on the dimension of $ J_I $ for various alphabets $ I $.
We also discuss how the convergence depends (for better or for worse) on the choice of $ I $.

\section{Iterated function systems}
\label{IFS}
In this section we will summarize some important definitions and results from the general theory of infinite conformal IFS.
For details, see \cite{MU}, \cite{Mau}.
We will first state the results for general IFS. 
From Section \ref{contfrac} through the end of the paper we will specify to the continued fraction IFS.

Consider a countable family $ \{ \phi_i : X \rightarrow X \}_{i \in I} $ of uniformly contracting injective maps of a compact metric space.
Uniform contraction means that there exists $ 0 < \lambda < 1 $ such that 
$$
d(\phi_i(x), \phi_i(y)) \leq \lambda \: d(x,y)
$$
for all $ i \in I $ and $ x,y \in X $.
In addition, assume the \textit{open set condition}: if $ \mathcal{O} = \text{int}(X) $, then 
$$
\phi_i(\mathcal{O}) \cap \phi_j(\mathcal{O}) = \emptyset
$$
for all $ i \neq j $.
We call such a family an \textit{iterated function system} or IFS.

\subsection{Symbolic coding}
Denote by $ I^n $ the words of length $ n $ on the alphabet $ I $, the finite words by $ I^{\ast} $, and the right-infinite words by $ I^{\infty} $.
If $ \omega = (\omega_1, \omega_2, \ldots) \in I^{\infty} $, for each $ n \geq 1 $ we have a truncation
$$
\omega |_n = (\omega_1, \ldots, \omega_n)
$$
of $ \omega $ to length $ n $.
For each $ \omega \in I^n $ there is a composition map
$$
\phi_{\omega} = \phi_{\omega_1} \circ \cdots \circ \phi_{\omega_n}.
$$
If $ \omega \in I^{\infty} $, by uniform contraction we have
$$
\text{diam} \: \phi_{\omega |_n}(X) \leq \lambda^n \: \text{diam} \: X.
$$ 
From this and the nesting property
$$
\phi_{\omega |_{n+1}}(X) \subset \phi_{\omega |_n}(X),
$$
the \textit{coding map} $ \pi : I^{\infty} \rightarrow X $ given by
$$
\pi(\omega) = \bigcap_{n \geq 1} \phi_{\omega |_n}(X)
$$
is well-defined, and its image $ J = \pi(I^{\infty}) $ is called the \textit{limit set} of the system.

\subsection{Topological pressure and Hausdorff dimension}
\label{toppres}
From now on we assume that $ X \subset \mathbb{C} $ and the maps $ \phi_i $ extend to conformal $ C^{1+\alpha} $ diffeomorphisms of an open neighborhood of $ X $.
We also assume they satisfy the bounded distortion property from the introduction.
Under these assumptions the limit
$$
P(t) = \lim_{n \to \infty} \frac{1}{n} \log \sum_{\omega \in I^n} |\phi'_{\omega}(z)|^t
$$
exists and is independent of the choice of $ z $.
$ P $ is called the \textit{topological pressure} of the IFS, and $ t \mapsto P(t) $ is continuous, strictly decreasing and convex on its domain of finiteness.
The pressure is related to the Hausdorff dimension of the limit set by Mauldin and Urba\'{n}ski's generalization of Bowen's equation:
$$
\dim_H(J) = \inf\{t \geq 0 : P(t)<0 \}.
$$
If there exists a $ t $ such that $ P(t)=0 $, then $ t $ is unique and coincides with $ \dim_H(J) $.
Systems whose pressure has a unique zero are called \textit{regular}.
Infinite systems whose cofinite subsystems are all regular are called \textit{hereditarily regular}.

\subsection{Continued fraction IFS}
\label{contfrac}
Following \cite{GM} \cite{MU}, we now give a geometric description of the continued fraction set $ J_I $ from the introduction.

The inversion map $ z \mapsto z^{-1} $ takes the shifted right half-plane $ H = \{z \in \mathbb{C} : \text{Re}(z) \geq 1 \} $ conformally onto the closed disk $ D $ centered at $ z=\frac{1}{2} $ with radius $ \frac{1}{2} $.
The translates $ \{D+b : b \in \mathbb{N} \times \mathbb{Z}i \} $ lie in $ H $, so the inversion maps them to an infinite family of disks in $ D $.
See Figure \ref{fig1}.

\begin{figure}[h]
	\minipage{0.5\textwidth}
	\includegraphics[width=\linewidth, trim={0cm 0.2cm -0.5cm 0cm}, clip]{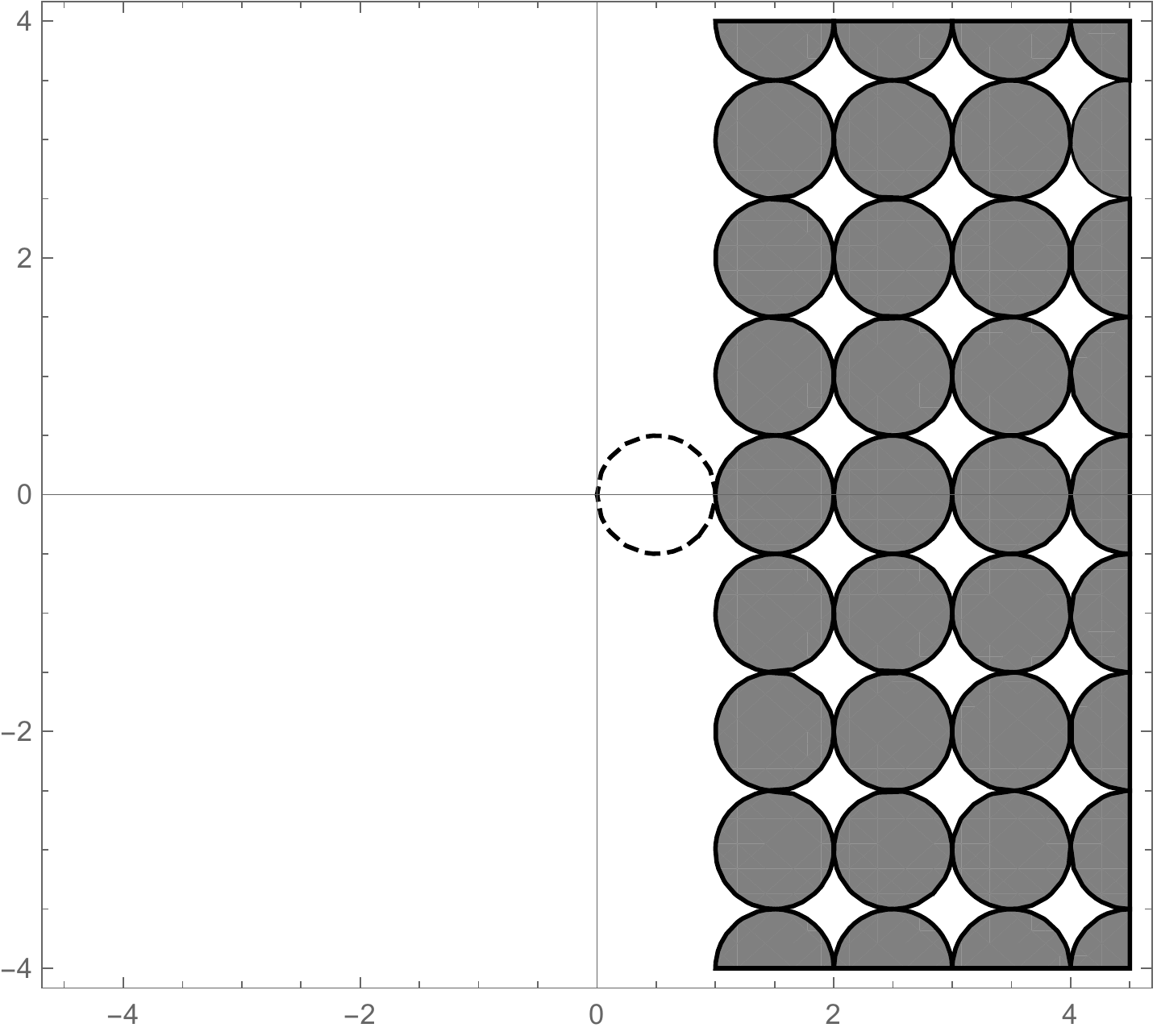}
	\caption*{The translates of $ D $ by $ \mathbb{N} \times \mathbb{Z}i $}
	\endminipage
	\minipage{0.5\textwidth}%
	\includegraphics[scale=0.5, trim={-0.5cm 0.2cm 0cm 0cm}, clip]{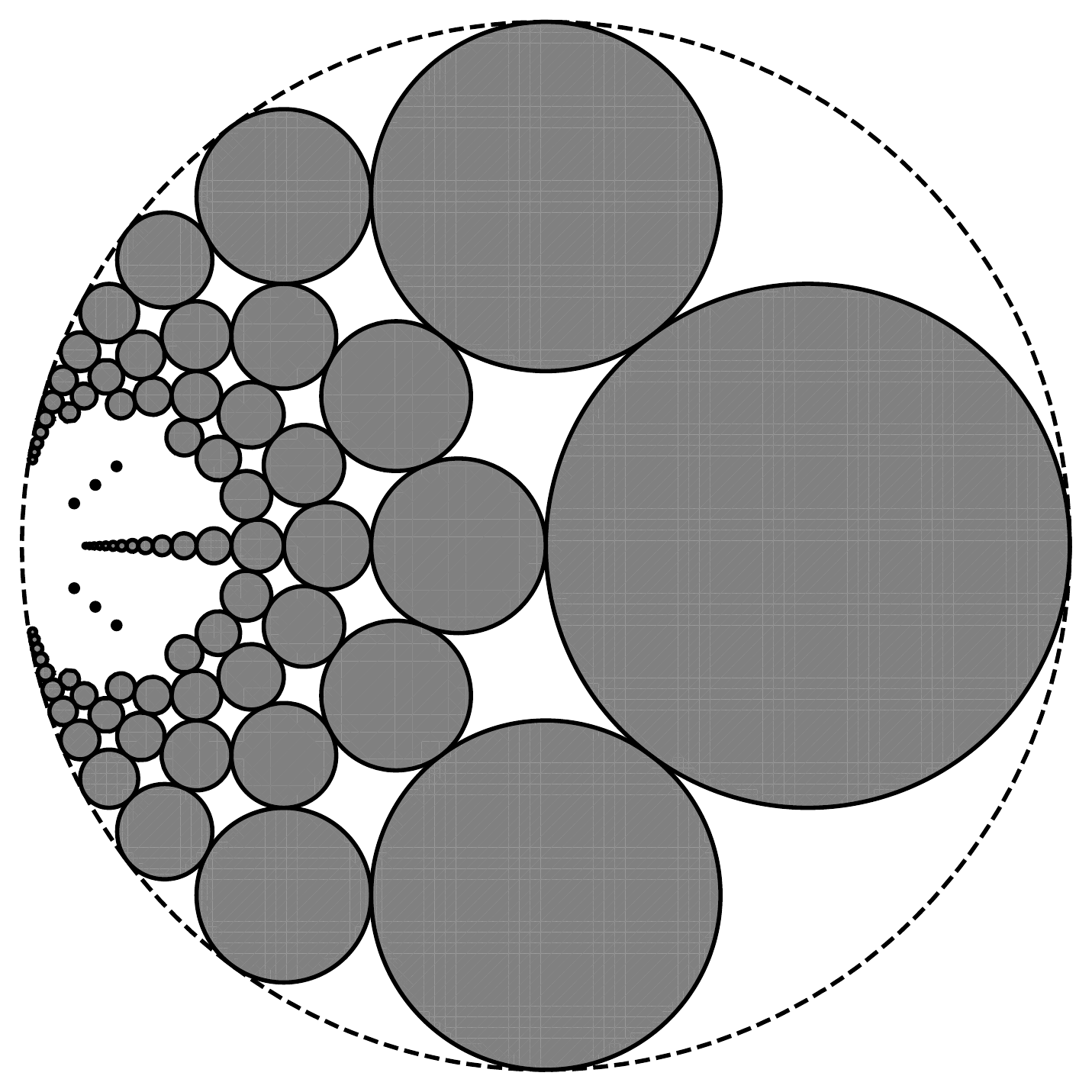}
	\caption*{The images in $ D $ of the translates under the inversion map}
	\endminipage
	\caption{}
	\label{fig1}
\end{figure}

Equivalently, $ \phi_b(D) \subset D $ for each $ b \in I \subset \mathbb{N} \times \mathbb{Z}i $, where $ \phi_b(z) = (b+z)^{-1} $.
These maps satisfy the open set condition, have regularity $ C^{1+\alpha} $, and satisfy bounded distortion.
Thus $ \mathcal{S}_I = \{\phi_b : D \rightarrow D \}_{b \in I} $ is a conformal IFS, with a caveat-- because $ \phi'_1(0) = -1 $ these maps are not uniformly contracting.
This is not a problem though, because (as pointed out in \cite{MU1}) the maps $ \phi_{b_1} \circ \phi_{b_2} $ \textit{are} uniformly contracting and generate the same limit set, which is what we are interested in (see Figure \ref{fig2}).

\begin{figure}[h]
	\includegraphics[width=0.8\linewidth, trim={0cm 0.2cm -0.5cm 0cm}, clip]{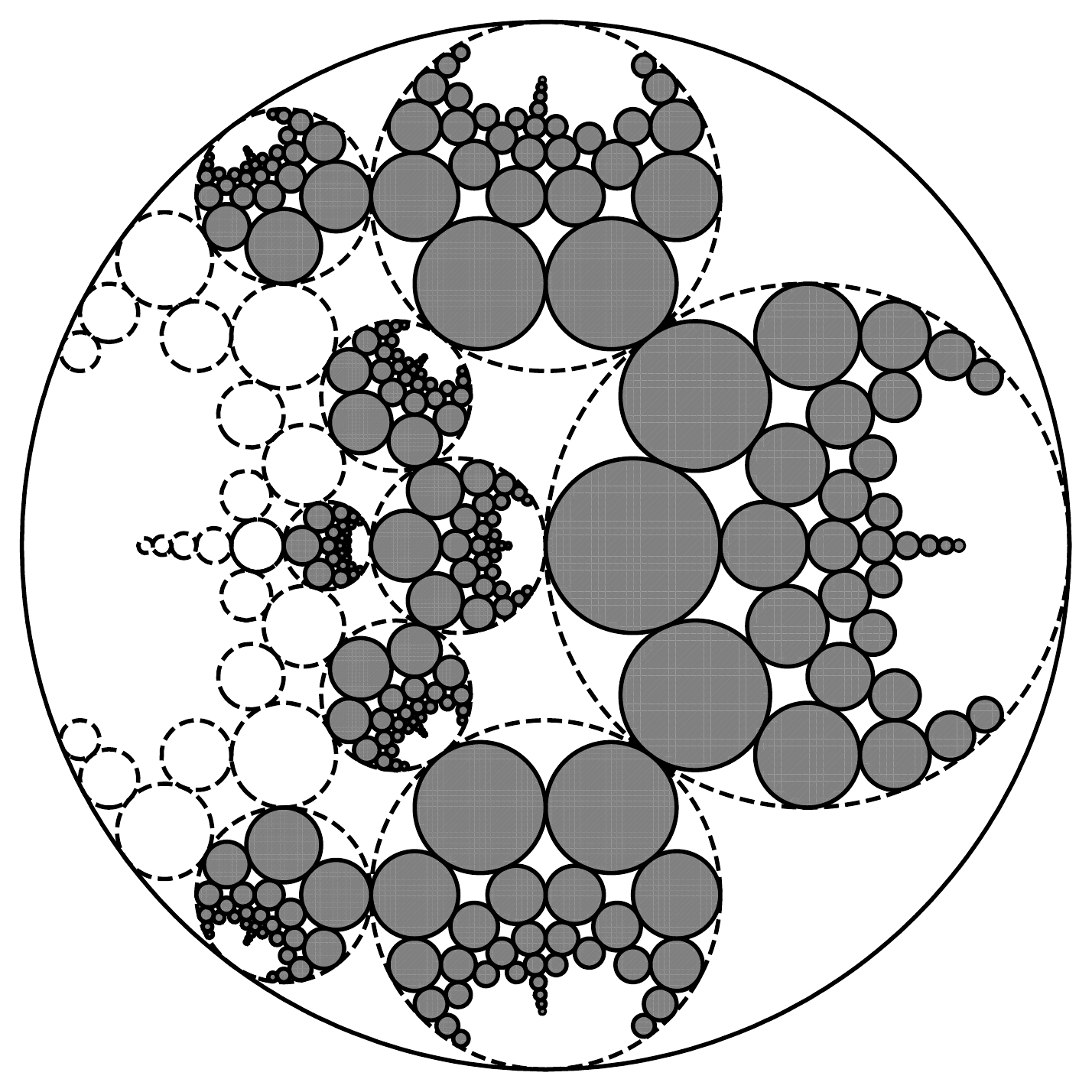}
	\caption{Images of some of the second-order iterates of $ \phi_{b_1} \circ \phi_{b_2}(D) $ for $ b_j \in \mathbb{N} \times \mathbb{Z}i $.}
	\label{fig2}
\end{figure}

The coding map $ \phi : I^{\infty} \rightarrow D $ takes a word to its infinite continued fraction expansion, so the limit set of $ \mathcal{S}_I $ is $ J_I $ discussed in the introduction.
See Figure \ref{fig3} for an illustration where $ \#I = 4 $.

\begin{figure}[h]
	\minipage{0.5\textwidth}
	\includegraphics[width=\linewidth, trim={0cm 0.2cm -0.5cm 0cm}, clip]{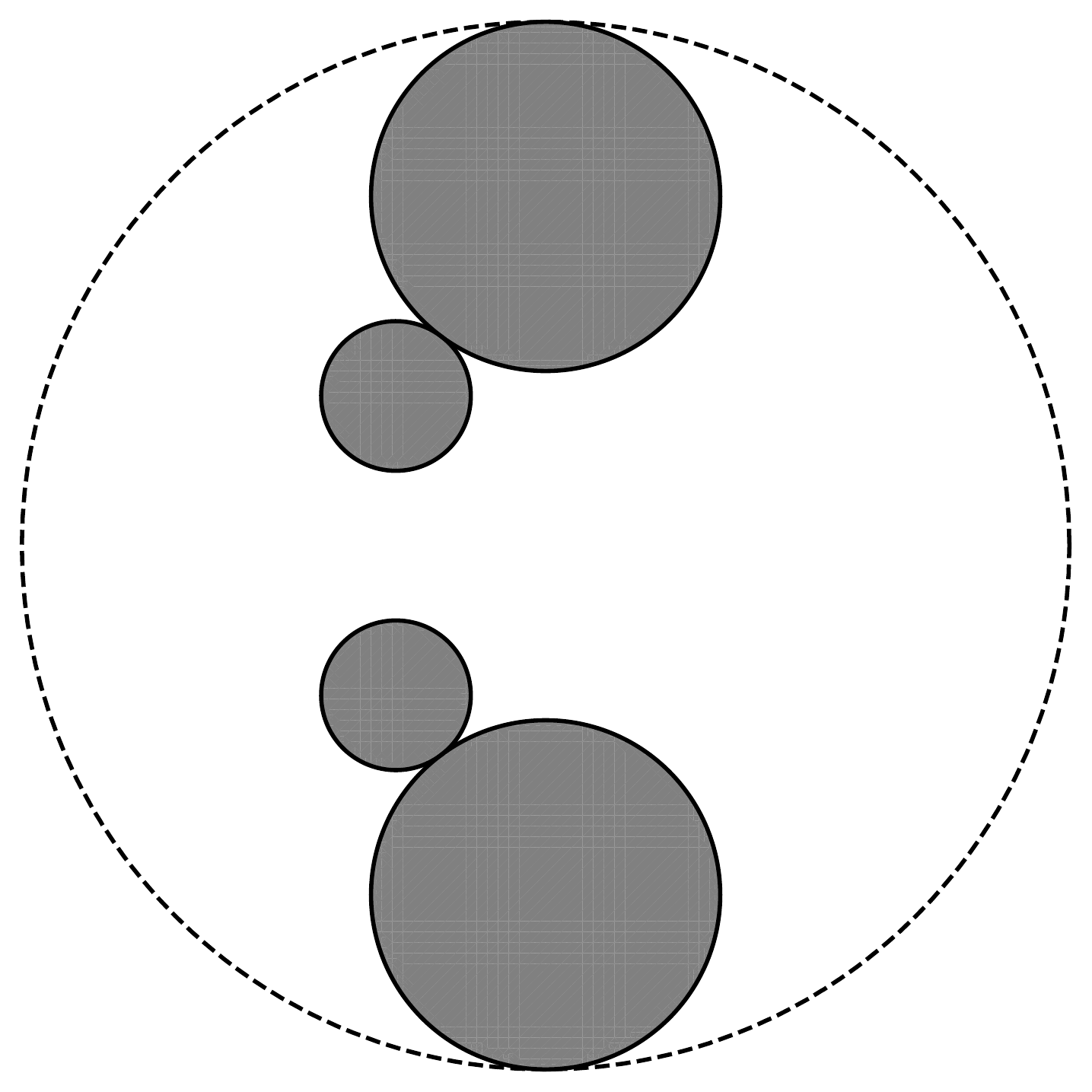}
	\caption*{The first-order iterates $ \phi_{b_1} $}
	\endminipage
	\minipage{0.5\textwidth}%
	\includegraphics[width=\linewidth, trim={-0.5cm 0.2cm 0cm 0cm}, clip]{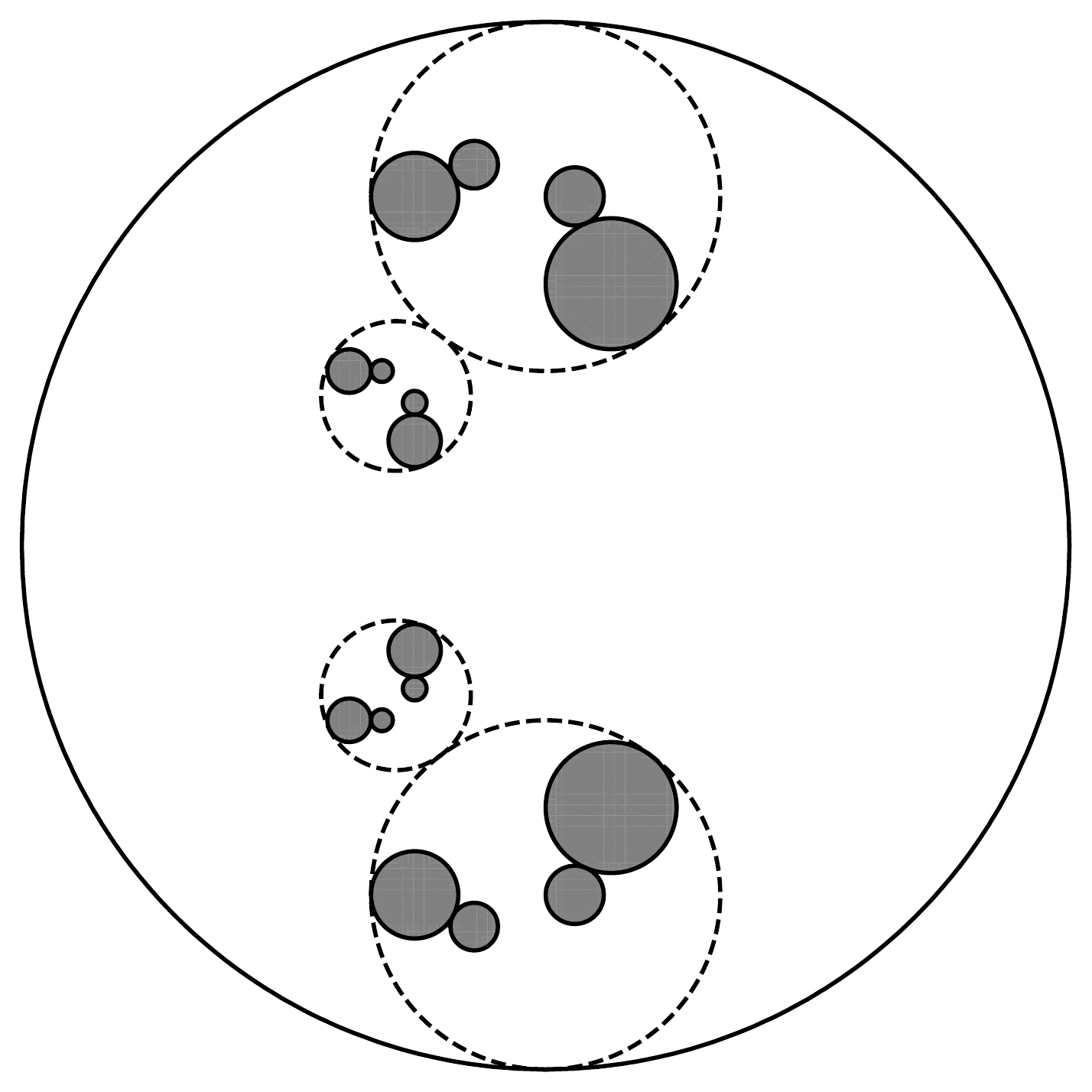}
	\caption*{The second-order iterates $ \phi_{b_1} \circ \phi_{b_2} $}
	\endminipage
	\caption{Images of some iterates of $\phi_b$ where $ b \in \{1 \pm i, 2 \pm i\} $}
	\label{fig3}
\end{figure}

\section{Quantitative distortion for continued fractions}
\label{quantdist}
In this section, we will prove Theorem \ref{thm2}.
First we will introduce some notation and prove some preparatory lemmas.

\subsection{Generalized convergents}
Let $ \omega \in I^{\infty} $.
For each $ n \geq 1 $ and $ z \in \mathbb{C} $, we have a finite continued fraction
$$ 
\phi_{\omega |_n}(z) = \cfrac{1}{\omega_1+\cfrac{1}{\omega_2+\cfrac{1}{\ddots +\cfrac{1}{\omega_{n-1}+\cfrac{1}{\omega_n+z}}}}}.
$$ 
Abusing common notation, we call $ \phi_{\omega |_n}(z) $ the \textit{nth generalized convergent} of $ \pi(\omega) $.
It can be written as a quotient
$$
\phi_{\omega |_n}(z) = \frac{p_{\omega |_n}(z)}{q_{\omega |_n}(z)}.
$$
whose numerator and denominator are linear expressions in $ z $ with Gaussian integer coefficients.
Classically, the convergents of a finite complex continued fraction are quotients of Gaussian integers as discussed in the introduction.
In the notation of Equation \ref{aomega}, we have
$$
a_{\omega |_n} = \phi_{\omega |_n}(0),
$$
so to recover the convergents from the generalized convergents, simply set $ z=0 $.
For $ 1 \leq n \leq 3 $, we can list the generalized convergents as follows.
\begin{align}
\label{conv1}
&\frac{p_{\omega_1}(z)}{q_{\omega_1}(z)} = \frac{1}{\omega_1+z}, \\ 
&\frac{p_{\omega_1, \omega_2}(z)}{q_{\omega_1, \omega_2}(z)} = \frac{\omega_2+z}{\omega_1(\omega_2+z)+1}, \nonumber \\
&\frac{p_{\omega_1, \omega_2, \omega_3}(z)}{q_{\omega_1, \omega_2, \omega_3}(z)} = \frac{\omega_2(\omega_3+z)+1}{\omega_1(\omega_2(\omega_3+z)+1)+\omega_3+z}. \nonumber
\end{align}
For $ n \geq 4 $, the generalized convergents are generated by the recurrence relations
\begin{align}
\label{conv2}
p_{\omega_1} = 1, &\quad p_{\omega_1, \omega_2} = \omega_2, \\
&p_{\omega_1, \ldots, \omega_i} = \omega_i p_{\omega_1, \ldots, \omega_{i-1}} + p_{\omega_1, \ldots, \omega_{i-2}} \; \text{ for } \; 3 \leq i \leq n-1, \nonumber \\
&p_{\omega_1, \ldots, \omega_n}(z) = (\omega_n+z) p_{\omega_1, \ldots, \omega_{n-1}} + p_{\omega_1, \ldots, \omega_{n-2}}, \nonumber
\end{align}
and
\begin{align}
\label{conv3}
q_{\omega_1} = \omega_1, &\quad q_{\omega_1, \omega_2} = \omega_1 \omega_2 +1, \\ &q_{\omega_1, \ldots, \omega_i} = \omega_i q_{\omega_1, \ldots, \omega_{i-1}} + q_{\omega_1, \ldots, \omega_{i-2}} \; \text{ for } \; 3 \leq i \leq n-1, \nonumber \\ 
&q_{\omega_1, \ldots, \omega_n}(z) = (\omega_n+z) q_{\omega_1, \ldots, \omega_{n-1}} + q_{\omega_1, \ldots, \omega_{n-2}}. \nonumber
\end{align}
These are a straightforward modification of the classical recurrence relations for convergents of finite continued fractions (see e.g. \cite{Hen3}).

In the form of three technical lemmas, we now present some surprising duality results that emerge from these recurrence relations.

\begin{lemma}
	\label{convlem1}
	For all $ n \geq 2 $, $ \omega \in I^n $ and $ z \in \mathbb{C} $, 
	$$ 
	p_{\omega_1, \ldots, \omega_n}(z) = q_{\omega_2, \ldots, \omega_n}(z).
	$$
\end{lemma}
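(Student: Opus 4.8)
The plan is to argue by induction on $n$, exploiting the fact that the recurrence \eqref{conv2} defining $p_{\omega_1,\ldots,\omega_i}$ and the recurrence \eqref{conv3} defining $q_{\omega_1,\ldots,\omega_i}$ have \emph{identical shape} — each is of the form (new index)$\cdot$(previous term) $+$ (term before that) — and differ only in their two initial conditions. The key observation is that the denominator sequence $q_{\omega_2,\ldots,\omega_n}$, viewed as built up from the \emph{left} by prepending $\omega_2,\omega_3,\ldots$, starts with $q_{\omega_2}=\omega_2$ and $q_{\omega_2,\omega_3}=\omega_2\omega_3+1$, whereas the numerator sequence $p_{\omega_1,\ldots,\omega_n}$ starts with $p_{\omega_1}=1$ and $p_{\omega_1,\omega_2}=\omega_2$. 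So if we shift the numerator index down by one, the numerator's second term $p_{\omega_1,\omega_2}=\omega_2$ matches the denominator's first term $q_{\omega_2}=\omega_2$, and we need one more base case to anchor the induction.

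Concretely, I would first record the base cases. For $n=2$: $p_{\omega_1,\omega_2}(z)=\omega_2+z$ from \eqref{conv1}, and $q_{\omega_2}(z)=\omega_2+z$ from \eqref{conv3} (the length-one denominator rule with the $z$-slot, i.e. $q_{\omega_2}(z)=\omega_2+z$), so the identity holds. For $n=3$: from \eqref{conv1}, $p_{\omega_1,\omega_2,\omega_3}(z)=\omega_2(\omega_3+z)+1$, and $q_{\omega_2,\omega_3}(z)=\omega_2(\omega_3+z)+1$, again equal. Then for the inductive step, assume the identity holds for $n-1$ and $n-2$ (with the same final $z$-variable appearing in the last slot). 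Apply the recurrence \eqref{conv2} for $p$ in the top index $n$:
$$
p_{\omega_1,\ldots,\omega_n}(z)=(\omega_n+z)\,p_{\omega_1,\ldots,\omega_{n-1}}+p_{\omega_1,\ldots,\omega_{n-2}},
$$
and the recurrence \eqref{conv3} for $q$:
$$
q_{\omega_2,\ldots,\omega_n}(z)=(\omega_n+z)\,q_{\omega_2,\ldots,\omega_{n-1}}+q_{\omega_2,\ldots,\omega_{n-2}}.
$$
By the inductive hypothesis $p_{\omega_1,\ldots,\omega_{n-1}}=q_{\omega_2,\ldots,\omega_{n-1}}$ and $p_{\omega_1,\ldots,\omega_{n-2}}=q_{\omega_2,\ldots,\omega_{n-2}}$, and since the coefficient $(\omega_n+z)$ is the same in both lines, the two right-hand sides agree, giving $p_{\omega_1,\ldots,\omega_n}(z)=q_{\omega_2,\ldots,\omega_n}(z)$.

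The main subtlety — and the step I would be most careful about — is bookkeeping the role of $z$ and the ``off-by-one'' index shift: the recurrences in \eqref{conv2}–\eqref{conv3} treat the \emph{last} letter specially (it carries the $+z$), while the induction above needs to peel off the \emph{first} letter of the word $(\omega_2,\ldots,\omega_n)$ on the $q$ side. One must check that peeling the first letter of a denominator word is legitimate, i.e. that $q_{\omega_2,\ldots,\omega_n}(z)$ obeys a recurrence in its top index with the \emph{same} structural form as \eqref{conv3} — this is immediate because \eqref{conv3} is symmetric in how it extends from the top, and the only data that changes under prepending $\omega_2$ is the pair of base values, which we have matched. Once the two base cases $n=2,3$ are verified and the index alignment is pinned down, the induction closes with no further computation.
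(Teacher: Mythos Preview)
Your proof is correct and is essentially the paper's argument: verify $n=2,3$ from \eqref{conv1}, then use that the recurrences \eqref{conv2} and \eqref{conv3} have identical form in the top index so the identity propagates by induction. Your final paragraph's worry is unnecessary---both recurrences peel off the \emph{last} letter $\omega_n$ (not the first), so the index alignment is automatic once the base cases match.
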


\begin{proof}
For $ n=2 $ and $ 3 $, this can be verified directly using Equation \ref{conv1}.
For $ n \geq 4 $, in Equations \ref{conv2} and \ref{conv3}, notice that $ p_{\omega_1, \omega_2} = q_{\omega_2} = \omega_2 $, and for $ 3 \leq i \leq n $, $ p_{\omega |_i} $ and $ q_{\omega |_i} $ are subject to the exact same recurrence relation, so the pattern persists.
\end{proof}

\begin{lemma}
	\label{convlem2}
	For all $ \omega \in I^{\ast} $, 
	$$ 
	q_{\omega}(0) = q_{\widetilde{\omega}}(0).
	$$
\end{lemma}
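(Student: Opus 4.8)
The plan is to encode the convergents in a product of $2\times2$ matrices and then exploit the elementary fact that transposing such a product reverses the order of the factors. First I would record, as is already noted in the text, that setting $z=0$ in Equations \ref{conv2} and \ref{conv3} turns the last factor $\omega_n+z$ into $\omega_n$, so that $q_{\omega}(0)$ and $p_{\omega}(0)$ are just the classical convergent numerator and denominator, satisfying $p_{\omega|_i}(0)=\omega_i p_{\omega|_{i-1}}(0)+p_{\omega|_{i-2}}(0)$ and $q_{\omega|_i}(0)=\omega_i q_{\omega|_{i-1}}(0)+q_{\omega|_{i-2}}(0)$ for all $i\ge 2$, once one adopts the conventions $p_{\omega|_0}(0)=0$, $q_{\omega|_0}(0)=1$ (compatible with $p_{\omega_1}=1$, $q_{\omega_1}=\omega_1$). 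Writing $M_b=\begin{pmatrix} b & 1\\ 1 & 0\end{pmatrix}$ and $S=M_0=\begin{pmatrix} 0 & 1\\ 1 & 0\end{pmatrix}$, a routine induction on $n$ using these recurrences then yields the matrix identity
$$
\begin{pmatrix} p_{\omega|_n}(0) & p_{\omega|_{n-1}}(0)\\ q_{\omega|_n}(0) & q_{\omega|_{n-1}}(0)\end{pmatrix}=S\,M_{\omega_1}M_{\omega_2}\cdots M_{\omega_n}.
$$

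The key point is that $S$ and every $M_b$ is symmetric and $S^2=\mathrm{Id}$. Transposing the identity and using $(AB)^{T}=B^{T}A^{T}$, the right-hand side transposes to $M_{\omega_n}\cdots M_{\omega_1}S$; multiplying on the right by $S$ and on the left by $S$ gives $S\,M_{\omega_n}\cdots M_{\omega_1}=S\,(S\,M_{\omega_1}\cdots M_{\omega_n})^{T}\,S$. Applying the matrix identity a second time, now to the dual word $\widetilde{\omega}=(\omega_n,\ldots,\omega_1)$ (which has length $n$, so $\widetilde{\omega}|_n=\widetilde{\omega}$), identifies the left-hand side with $\begin{pmatrix} p_{\widetilde{\omega}}(0) & p_{\widetilde{\omega}|_{n-1}}(0)\\ q_{\widetilde{\omega}}(0) & q_{\widetilde{\omega}|_{n-1}}(0)\end{pmatrix}$. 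Since conjugation by $S$ reverses both the row order and the column order of a $2\times2$ matrix, it sends $\begin{pmatrix} a & b\\ c & d\end{pmatrix}^{T}=\begin{pmatrix} a & c\\ b & d\end{pmatrix}$ to $\begin{pmatrix} d & c\\ b & a\end{pmatrix}$, so the right-hand side equals $\begin{pmatrix} q_{\omega|_{n-1}}(0) & p_{\omega|_{n-1}}(0)\\ q_{\omega|_n}(0) & p_{\omega|_n}(0)\end{pmatrix}$. Reading off the $(2,1)$-entries gives $q_{\widetilde{\omega}}(0)=q_{\omega|_n}(0)=q_{\omega}(0)$, which is the claim (and the $(1,1)$-entries reprove Lemma \ref{convlem1}).

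I do not expect a genuine obstacle here: the only things demanding a little care are the base case of the matrix identity, where the $\omega|_0$ conventions enter, and keeping the row/column bookkeeping straight under conjugation by $S$. For completeness I would also mention the purely recursive alternative, which avoids matrices: from the composition $\phi_{\omega_1,\ldots,\omega_n}=\phi_{\omega_1}\circ\phi_{\omega_2,\ldots,\omega_n}$ and Lemma \ref{convlem1} one extracts a ``front'' recurrence $q_{\omega_1,\ldots,\omega_n}(0)=\omega_1 q_{\omega_2,\ldots,\omega_n}(0)+q_{\omega_3,\ldots,\omega_n}(0)$ for $n\ge 3$, and then an induction on word length — comparing this with the ``back'' recurrence for $q_{\omega_n,\ldots,\omega_1}(0)$ and invoking the inductive hypothesis on the subwords of lengths $n-1$ and $n-2$, with $n=1,2$ checked directly from Equation \ref{conv1} — closes the argument; there the one subtlety is verifying that pasting $\phi_{\omega_1}$ onto $\phi_{\omega_2,\ldots,\omega_n}$ produces exactly the canonical numerator and denominator rather than some scalar multiple, which follows from the coprimality of $p_{\omega|_n}$ and $q_{\omega|_n}$ encoded in the standard determinant identity $p_{\omega|_n}q_{\omega|_{n-1}}-p_{\omega|_{n-1}}q_{\omega|_n}=(-1)^{n+1}$.
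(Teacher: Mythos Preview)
Your argument is correct and takes a genuinely different route from the paper. The paper proceeds by strong induction on the word length: after checking $n=1,2$ by hand, it expands $q_{\omega_1,\ldots,\omega_{n+1}}$ via the ``back'' recurrence (Equation~\ref{conv3}), applies the inductive hypothesis to the subwords of lengths $n$ and $n-1$, then unfolds and refolds using the dual ``front'' recurrence to arrive at $q_{\omega_{n+1},\ldots,\omega_1}$---an elementary but somewhat intricate chain of eight equalities, essentially the ``purely recursive alternative'' you sketch at the end. Your matrix encoding is more structural: once the identity $SM_{\omega_1}\cdots M_{\omega_n}=\bigl(\begin{smallmatrix}p_n&p_{n-1}\\q_n&q_{n-1}\end{smallmatrix}\bigr)$ is in hand, the lemma reduces to the one-line fact that transposing a product of symmetric matrices reverses their order, and the remaining entries recover the other duality relations for free. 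The cost is importing the $2\times2$ formalism and the conventions $p_{\omega|_0}=0$, $q_{\omega|_0}=1$, which the paper never needs. One harmless slip to fix: conjugating $\bigl(\begin{smallmatrix}a&c\\b&d\end{smallmatrix}\bigr)$ by $S$ yields $\bigl(\begin{smallmatrix}d&b\\c&a\end{smallmatrix}\bigr)$, not $\bigl(\begin{smallmatrix}d&c\\b&a\end{smallmatrix}\bigr)$; your final displayed matrix and the $(2,1)$-entry conclusion are nonetheless correct.
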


\begin{proof}
	Let $ \omega = (\omega_1, \ldots, \omega_n) $, and recall that we write $ q_{\omega} $ for $ q_{\omega}(0) $.
	By adapting Equation \ref{conv3}, $ q_{\widetilde{\omega}} = q_{\omega_n, \ldots, \omega_1} $ is generated by the dual recurrence relation 
	\begin{align} 
	\label{conv4}
	q_{\omega_n}=\omega_n, &\quad q_{\omega_n, \omega_{n-1}} = \omega_n \omega_{n-1} +1, \\
	&q_{\omega_n, \ldots, \omega_{n-i}} = \omega_{n-i} q_{\omega_n, \ldots, \omega_{n-i+1}}+q_{\omega_n, \ldots, \omega_{n-i+2}} \; \text{ for } \; 2 \leq i \leq n. \nonumber
	\end{align}
	We proceed by strong induction on $ n $. The case $ n=1 $ is vacuous, and $ n=2 $ follows from
	$$
	q_{\omega_1,\omega_2} = \omega_1 \omega_2+1 = q_{\omega_2, \omega_1}.
	$$
	Now assume that $ q_{\omega} = q_{\widetilde{\omega}} $ for all $ \omega $ with length $ \leq n $.
	Together with Equations \ref{conv3} and \ref{conv4}, the result follows from the following computation.
	\begin{align*}
	q_{\omega_1,\ldots,\omega_{n+1}} &= \omega_{n+1} q_{\omega_1, \ldots, \omega_n} + q_{\omega_1, \ldots, \omega_{n-1}} \\
	&= \omega_{n+1} q_{\omega_n, \ldots, \omega_1} + q_{\omega_{n-1}, \ldots, \omega_1} \\
	&= \omega_{n+1} \left( \omega_1 q_{\omega_n, \ldots, \omega_2} + q_{\omega_n, \ldots, \omega_3} \right) + \omega_1 q_{\omega_{n-1}, \ldots, \omega_2} + q_{\omega_{n-1}, \ldots, \omega_3} \\
	&= \omega_1 \left( \omega_{n+1} q_{\omega_n, \ldots, \omega_2} + q_{\omega_{n-1}, \ldots, \omega_2} \right) + \omega_{n+1} q_{\omega_n, \ldots, \omega_3} + q_{\omega_{n-1}, \ldots, \omega_3} \\
	&= \omega_1 \left( \omega_{n+1} q_{\omega_2, \ldots, \omega_n} + q_{\omega_2, \ldots, \omega_{n-1}} \right) + \omega_{n+1} q_{\omega_3, \ldots, \omega_n} + q_{\omega_3, \ldots, \omega_{n-1}} \\
	&= \omega_1 q_{\omega_2, \ldots, \omega_{n+1}} + q_{\omega_3, \ldots, \omega_{n+1}} \\
	&= \omega_1 q_{\omega_{n+1}, \ldots, \omega_2} + q_{\omega_{n+1}, \ldots, \omega_3} \\
	&= q_{\omega_{n+1}, \ldots, \omega_1}.
	\end{align*}	
\end{proof}

\begin{lemma}
	\label{convlem3}
	For all $ \omega \in I^{\ast} $, 
	$$ 
	q_{\omega}(1) = q_{\omega}(0)+p_{\widetilde{\omega}}(0).
	$$
\end{lemma}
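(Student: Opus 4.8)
The plan is to reduce the identity to the two previous lemmas by exploiting the fact that $q_\omega(z)$ is affine in $z$. Write $\omega = (\omega_1,\ldots,\omega_n)$. Reading off the last line of the recurrence in Equation \ref{conv3} — or checking Equation \ref{conv1} directly when $n \leq 3$ — we see that
$$
q_{\omega_1,\ldots,\omega_n}(z) = (\omega_n+z)\,q_{\omega_1,\ldots,\omega_{n-1}}(0) + q_{\omega_1,\ldots,\omega_{n-2}}(0)
$$
is a linear polynomial in $z$ with slope $q_{\omega_1,\ldots,\omega_{n-1}}(0)$. Evaluating at $z=1$ and $z=0$ and subtracting gives
$$
q_\omega(1) = q_\omega(0) + q_{\omega_1,\ldots,\omega_{n-1}}(0),
$$
so the whole statement comes down to identifying $q_{\omega_1,\ldots,\omega_{n-1}}(0)$ with $p_{\widetilde{\omega}}(0)$.

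For this, I would first apply Lemma \ref{convlem1} to the \emph{dual} word $\widetilde\omega = (\omega_n, \omega_{n-1},\ldots,\omega_1)$, which has length $n\geq 2$ in the nontrivial case: this yields $p_{\widetilde\omega}(0) = p_{\omega_n,\omega_{n-1},\ldots,\omega_1}(0) = q_{\omega_{n-1},\ldots,\omega_1}(0)$. Now observe that $(\omega_{n-1},\ldots,\omega_1)$ is exactly the dual of $(\omega_1,\ldots,\omega_{n-1})$, so Lemma \ref{convlem2} gives $q_{\omega_{n-1},\ldots,\omega_1}(0) = q_{\omega_1,\ldots,\omega_{n-1}}(0)$. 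Chaining these two equalities produces precisely the slope computed above, completing the argument.

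The degenerate case $n=1$ should be dispatched by hand: there $q_{\omega_1}(z) = \omega_1 + z$ and $p_{\omega_1}(0) = 1$, so both sides equal $\omega_1 + 1$ (equivalently, one adopts the convention $q_\emptyset = 1$, under which the general argument still goes through since Lemma \ref{convlem1} applied to a length-one word is just $p_{\omega_1} = 1 = q_\emptyset$). I do not expect a genuine obstacle here; the only thing demanding care is the index bookkeeping for the reversed words and the empty-word edge case — in particular making sure Lemma \ref{convlem1} is invoked with the dual word and that its length hypothesis $n \geq 2$ is met.
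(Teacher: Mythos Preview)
Your proposal is correct and follows essentially the same route as the paper: compute $q_\omega(1)-q_\omega(0)=q_{\omega_1,\ldots,\omega_{n-1}}(0)$ from the recurrence, then identify this with $p_{\widetilde\omega}(0)$ via Lemmas \ref{convlem1} and \ref{convlem2} (the paper just reads the chain of equalities in the opposite order). Your explicit treatment of the $n=1$ case is a small bonus, since the paper's argument tacitly assumes $n\geq 2$.
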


\begin{proof}
	Let $ \omega = (\omega_1, \ldots, \omega_n) $. 
	By Equation \ref{conv3}, 
	\begin{align*}
	q_{\omega_1,\ldots,\omega_n}(1) - q_{\omega_1, \ldots, \omega_n}(0) &= (\omega_n+1)q_{\omega_1,\ldots,\omega_{n-1}} + q_{\omega_1,\ldots,\omega_{n-2}} - (\omega_n q_{\omega_1,\ldots,\omega_{n-1}} + q_{\omega_1,\ldots,\omega_{n-2}}) \\
	&= q_{\omega_1,\ldots,\omega_{n-1}}.
	\end{align*}
	Because $ q_{\omega_1, \ldots, \omega_{n-1}} = q_{\omega_1, \ldots, \omega_{n-1}}(0) $, together with Lemmas \ref{convlem1} and \ref{convlem2} we obtain
	$$
	q_{\omega_1,\ldots,\omega_n}(1) - q_{\omega_1, \ldots, \omega_n}(0) = q_{\omega_1,\ldots,\omega_{n-1}}(0) = q_{\omega_{n-1},\ldots,\omega_1}(0) = p_{\omega_n,\ldots,\omega_1}(0).
	$$
\end{proof}

\begin{lemma}
	\label{convlem4}
	For each $ \omega \in I^{\ast} $ and all $ z \in \mathbb{C} $,
	$$
	|\phi'_{\omega}(z)| = \frac{1}{|q_{\omega}(z)|^2}.
	$$
\end{lemma}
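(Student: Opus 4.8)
The plan is to use that every $\phi_b$, and hence every $\phi_\omega$, is a Möbius transformation, and to read its derivative off a matrix representative. Write $\phi_b(z) = \frac{0\cdot z + 1}{1\cdot z + b}$, so that $\phi_b$ is represented by $A_b = \begin{pmatrix} 0 & 1 \\ 1 & b \end{pmatrix}$, with $\det A_b = -1$. Composition of Möbius maps corresponds to matrix multiplication, so $\phi_\omega = \phi_{\omega_1}\circ\cdots\circ\phi_{\omega_n}$ is represented by $A_\omega = A_{\omega_1}\cdots A_{\omega_n}$, whence $\det A_\omega = (-1)^n$. Writing $A_\omega = \begin{pmatrix} \alpha & \beta \\ \gamma & \delta \end{pmatrix}$ and using the standard formula $\psi'(z) = \frac{\alpha\delta-\beta\gamma}{(\gamma z + \delta)^2}$ for the Möbius map $\psi(z) = \frac{\alpha z + \beta}{\gamma z + \delta}$, we obtain $\phi_\omega'(z) = \frac{(-1)^n}{(\gamma z + \delta)^2}$.

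It then remains to check that the denominator $\gamma z + \delta$ of this normal form equals the polynomial $q_\omega(z)$ defined by the recurrences in Equations \ref{conv1}--\ref{conv3}. I would verify this by induction on the word length: right-multiplying $A_{\omega|_{n-1}}$ by $A_{\omega_n}$ sends its bottom row $(\gamma,\delta)$ to $(\delta,\, \gamma + \delta\,\omega_n)$, which is exactly the last step of recurrence \ref{conv3} (the cases $n=1,2$ being immediate from Equation \ref{conv1}); hence the bottom row of $A_\omega$ is $(q_{\omega|_{n-1}}(0),\, q_\omega(0))$, and $\gamma z + \delta = q_{\omega|_{n-1}}(0)\,z + q_\omega(0) = q_\omega(z)$. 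Taking absolute values gives $|\phi_\omega'(z)| = |q_\omega(z)|^{-2}$ at every $z$ with $q_\omega(z)\neq 0$ --- in particular throughout $D$, where $\phi_\omega$ is finite --- while at the single pole of $\phi_\omega$ both sides are infinite.

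I expect the matching of the matrix entries with the recursively defined convergents to be the only point that needs care, and it is routine. An equivalent matrix-free route is to differentiate $\phi_\omega(z) = p_\omega(z)/q_\omega(z)$ directly: by Equations \ref{conv2} and \ref{conv3}, both $p_\omega(z) = p_{\omega|_{n-1}}\,z + p_\omega$ and $q_\omega(z) = q_{\omega|_{n-1}}\,z + q_\omega$ are affine in $z$, so the quotient rule gives $\phi_\omega'(z) = \frac{p_{\omega|_{n-1}}\,q_\omega - p_\omega\,q_{\omega|_{n-1}}}{q_\omega(z)^2}$, with the $z$-terms in the numerator cancelling. The numerator is a continuant determinant; substituting the recurrences into it shows that it equals the negative of the same expression with $n$ replaced by $n-1$, so a one-line induction starting from $p_{\omega_1}q_{\omega_1,\omega_2} - p_{\omega_1,\omega_2}q_{\omega_1} = 1$ identifies it as $(-1)^n$, and the conclusion follows as before (with the case $n=1$, where $p_{\omega_1}(z)=1$, checked by hand).
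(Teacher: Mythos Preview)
Your proof is correct, but it takes a genuinely different route from the paper's. The paper expands $|\phi_\omega'(z)|$ via the chain rule, uses the identity $|\phi_b'(w)|=|\phi_b(w)|^2$ to rewrite the product as $\bigl(\prod_{i=1}^n |p_{\omega_i,\ldots,\omega_n}(z)|/|q_{\omega_i,\ldots,\omega_n}(z)|\bigr)^2$, and then invokes Lemma~\ref{convlem1} to telescope: since $p_{\omega_i,\ldots,\omega_n}(z)=q_{\omega_{i+1},\ldots,\omega_n}(z)$, everything cancels except $p_{\omega_n}(z)=1$ in the numerator and $q_\omega(z)$ in the denominator. Your matrix approach instead reads the derivative directly from the M\"obius representation and identifies the bottom row of $A_{\omega_1}\cdots A_{\omega_n}$ with $(q_{\omega|_{n-1}},q_\omega)$ by the same recurrence~\eqref{conv3}; your alternative quotient-rule argument is the classical continuant identity $p_{n-1}q_n-p_nq_{n-1}=(-1)^n$ in disguise. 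Your arguments are self-contained and bypass Lemma~\ref{convlem1} entirely, which is a mild economy; the paper's telescoping product, on the other hand, fits more organically into the sequence of duality lemmas it has just developed and makes the role of the suffix words $(\omega_i,\ldots,\omega_n)$ visible. Either way the computation is routine, and the only point of care---matching $\gamma z+\delta$ with the affine polynomial $q_\omega(z)=q_{\omega|_{n-1}}(0)\,z+q_\omega(0)$ coming from the last line of~\eqref{conv3}---you have handled correctly.
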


\begin{proof}
	Let $ \omega = (\omega_1, \ldots, \omega_n) $.
	By the chain rule,
	$$
	|\phi_{\omega}'(z)| = \prod_{i=1}^n |\phi'_{\omega_i} \left(\phi_{\omega_{i+1}, \ldots, \omega_n}(z)\right)|.
	$$
	Because $ |\phi'_b(z)| = |\phi_b(z)|^2 $ for each $ b \in I $, the above equation can be rewritten as
	$$
	|\phi_{\omega}'(z)| = \left(\prod_{i=1}^n |\phi_{\omega_i \ldots, \omega_n}(z)|\right)^2 = \left(\prod_{i=1}^n \frac{|p_{\omega_i, \ldots, \omega_n}(z)|}{|q_{\omega_i, \ldots, \omega_n}(z)|}\right)^2.
	$$
	By Lemma \ref{convlem1}, all the terms in this product cancel except for the numerator of the first and the denominator of the last.
	This leaves
	$$
	|\phi_{\omega}'(x)| = \left(\frac{|p_{\omega_n}(z)|}{|q_{\omega_1, \ldots, \omega_n}(z)|}\right)^2 = \frac{1}{|q_{\omega}(z)|^2},
	$$
	because $ p_{\omega_n}(z) = 1 $ from Equation \ref{conv1}.
\end{proof}

\begin{proof}[Proof of Theorem \ref{thm2}]
	For each $ \omega \in I^{\ast} $, $ q_{\omega}(z) $ is a linear expression in $ z $, with Gaussian integer coefficients.
	It is an exercise in basic complex analysis to show that a function $ D \rightarrow \mathbb{R}_{\geq 0} $ defined by $ z \mapsto |az+b| $-- where $ a,b $ are Gaussian integers-- takes its maximum and minimum at $ z=1 $ and $ z=0 $, respectively.
	Then by Lemma \ref{convlem4},
	$$
	\frac{1}{|q_{\omega}(1)|^2} \leq |\phi_{\omega}'(z)| \leq \frac{1}{|q_{\omega}(0)|^2},
	$$
	for all $ z \in D $, and the inequalities are strict unless $ z=0 $ or $ z=1 $.
	Thus
	$$
	\left(\frac{|q_{\omega}(1)|}{|q_{\omega}(0)|}\right)^{-2} \leq \frac{|\phi_{\omega}'(z)|}{|\phi_{\omega}'(w)|} \leq \left(\frac{|q_{\omega}(1)|}{|q_{\omega}(0)|}\right)^2
	$$
	for all $ z,w \in X $, with strict inequalities unless $ z=0 $ and $ w=1 $ or vice versa.
	By Lemmas \ref{convlem2} and \ref{convlem3},
	$$
	\frac{q_{\omega}(1)}{q_{\omega}(0)} = \frac{q_{\omega}(0)+p_{\widetilde{\omega}}(0)}{q_{\omega}(0)} = 1+\frac{p_{\widetilde{\omega}}(0)}{q_{\widetilde{\omega}}(0)} = 1+\phi_{\widetilde{\omega}}(0)
	$$
	which concludes the proof.
\end{proof}

\section{Bounds on Hausdorff dimension}
\label{Hbounds}
In this section, we will prove Theorem \ref{thm1} and derive some numerical estimates from it. 
In \cite{MU}, Mauldin and Urba\'{n}ski showed that for $ I = \mathbb{N} \times \mathbb{Z}i $, the continued fraction IFS $ \mathcal{S}_I $ is hereditarily regular.
Thus $ \mathcal{S}_I $ is regular for all alphabets $ I \subset \mathbb{N} \times \mathbb{Z}i $, and we may calculate $ t = \dim_H(J_I) $ by solving Bowen's equation $ P(t)=0 $.

\begin{proof}[Proof of Theorem \ref{thm1}]
By the chain rule, for each $ \omega \in I^{kn} $ we have
$$
\phi_{\omega}'(0) = \prod_{i=1}^{n-1} \phi'_{\omega_{(i-1)k+1},\ldots,\omega_{ik}} \left(\phi_{\omega_{ik+1},\ldots,\omega_{kn}}(0)\right).
$$
Applying quantitative distortion,
$$
\prod_{i=1}^n (1+\phi_{\omega_{ik},\ldots,\omega_{(i-1)k+1}}(0))^{-2} < \frac{|\phi'_{\omega}(0)|}{\prod_{i=1}^n \phi'_{\omega_{(i-1)k+1},\ldots,\omega_{ik}}(0)} < \prod_{i=1}^n (1+\phi_{\omega_{ik},\ldots,\omega_{(i-1)k+1}}(0))^2.
$$
For all $ t \geq 0 $, summing over all $ \omega \in I^{kn} $ gives
$$
\left(\sum_{\omega \in I^k} |\phi'_{\omega}(0)|^t (1+\phi_{\widetilde{\omega}}(0))^{-2t}\right)^n < \sum_{\omega \in I^{kn}} |\phi_{\omega}'(0)|^t < \left(\sum_{\omega \in I^k} |\phi'_{\omega}(0)|^t (1+\phi_{\widetilde{\omega}}(0))^{2t}\right)^n.
$$
By dividing the logarithm of this inequality by $ nk $ and taking $ n \to \infty $ we obtain $ P_k^-(t) < P(t) < P_k^+(t) $, where
$$
P_k^{\pm}(t) = \frac{1}{k} \log \left(\sum_{\omega \in I^k} |\phi'_{\omega}(0)|^t (1+\phi_{\widetilde{\omega}}(0))^{\pm 2t}\right).
$$
By Lemmas \ref{convlem2} and \ref{convlem4},
$$
|\phi'_{\omega}(0)| = \frac{1}{q_{\omega}(0)^2} = \frac{1}{q_{\widetilde{\omega}}(0)^2},
$$
so that
$$
P_k^{\pm}(t) = \frac{1}{k} \log \left(\sum_{\omega \in I^k} q_{\omega}^{-t} (1+a_{\omega})^{\pm 2t}\right)
$$
using the notation in Equations \ref{aomega} and \ref{qomega}.
For all $ k \geq 1 $ the functions $ P_k^{\pm} $ have similar properties to $ P $; they are decreasing, convex and continuous on their domains of finiteness and thus have unique zeros $ T_k^{\pm} $ satisfying equations stated in the theorem.

We now claim that $ \lim_{k \to \infty} T_k^{\pm} = \dim_H(J_I) $.
For each $ k \geq 1 $,
$$
P_k^{\pm}(t) \leq  \frac{1}{k} \log \sum_{\omega \in I^k} |\phi'_{\omega}(0)|^t \pm \frac{2t}{k} \log \sup_{\omega \in I^k} (1+a_{\omega}).
$$
Taking $ k \to \infty $ gives
\begin{equation}
\label{Ok}
|P_k^{\pm}(t) - P(t)| = O\left(\frac{1}{k}\right).
\end{equation}
Since $ T_k^{\pm} $ is the unique zero of $ P_k^{\pm} $ and $ \dim_H(J_I) $ the unique zero of $ P $, the claim follows.

To see that the convergence is monotonic, we will show that $ T_k^+ $ is monotone decreasing; the proof that $ T_k^- $ is monotone increasing is identical.
We will use the following fact from the general theory of IFS (see \cite{MU}, Theorem 4.1): $ t = \dim_H(J_I) $ satisfies
$$
\sum_{i \in I} |\phi'_i(0)|^t \leq 1.
$$
Because $ T_k^+ > \dim_H(J_I) $ for all $ k $,
$$
\sum_{i \in I} |\phi'_i(0)|^{T_k^+} < 1.
$$
From this, we have that
\begin{align*}
\sum_{\omega \in I^{k+1}} |\phi'_{\omega}(0)|^{T_k^+} (1+a_{\omega})^{2T_k^+} &= \sum_{\omega \in I^{k+1}} |\phi'_{\omega_1,\ldots, \omega_k}(\phi_{\omega_{k+1}}(0))|^{T_k^+} |\phi'_{\omega_{k+1}}(0)|^{T_k^+} (1+a_{\omega})^{2T_k^+} \\
&< \sum_{\omega_{k+1} \in I} |\phi'_{\omega_{k+1}}(0)|^{T_k^+} \sum_{\omega_1,\ldots,\omega_k \in I^k} |\phi'_{\omega_1,\ldots, \omega_k}(0)|^{T_k^+} (1+a_{\omega_1,\ldots,\omega_k})^{2T_k^+} \\
&=\sum_{i \in I} |\phi'_i(0)|^{T_k^+} < 1,
\end{align*}
so $ P_{k+1}^+(T_k^+) < 0 $.
Since $ P_{k+1}^+ $ is strictly decreasing with unique zero $ T_{k+1}^+ $, we conclude that $ T_{k+1}^+ < T_k^+ $.

It remains to estimate the speed of convergence.
By a theorem of Ruelle \cite{Rue}, $ t \mapsto P(t) $ is real analytic. 
By a Taylor expansion of $ P(t) $ in a neighborhood of its zero $ \dim_H(J_I) $,
\begin{align*}
P(t) &= \sum_{i=1}^{\infty} a_i (t-\dim_H(J_I))^i \\
&= (t-\dim_H(J_I)) \Big(a_1 + a_2(t-\dim_H(J_I))+a_3(t-\dim_H(J_I))^2 + \cdots \Big),
\end{align*}
where $ a_i $ are the Taylor coefficients. 
By Equation \ref{Ok},
\begin{align*}
|t-\dim_H(J_I)| &= \frac{|P_k^{\pm}(t)+O(\frac{1}{k})|}{|a_1 + a_2(t-\dim_H(J_I))+a_3(t-\dim_H(J_I))^2 +\cdots|} \\
&\leq \text{const.} |P_k^{\pm}(t)| + O\left(\frac{1}{k}\right)
\end{align*}
as $ k \to \infty $, for $ t $ in this neighborhood.
Because $ T_k^{\pm} \rightarrow \dim_H(J_I) $ as $ k \to \infty $, we may substitute $ t=T_k^{\pm} $ to obtain
$$
|T_k^{\pm} - \dim_H(J_I)| = O\left(\frac{1}{k}\right). 
$$
\end{proof}

\subsection{Numerical results}
In this section, we give some numerical dimension estimates that result from implementing the algorithm from Theorem \ref{thm1} in a computer algebra system.
We use Mathematica running on an ordinary PC, and limit to calculations that run in less than a minute. 
Of course, a faster machine will produce better estimates.

Given an alphabet $ I $, we wish to numerically solve the equation
\begin{equation}
\label{numsum}
\sum_{\omega \in I^k} q_{\omega}^{-t}(1+a_{\omega})^{\pm 2t} = 1
\end{equation}
for $ t $, with the largest value of $ k $ that computational speed will allow.
The values $ a_{\omega,\ldots,\omega_k} $ are just the finite continued fractions from Equation \ref{aomega}, and $ q_{\omega_1,\ldots,\omega_k} $ are the denominators of the convergents in Equation \ref{qomega}, which are generated by the recurrence relations (see \cite{Hen3})
$$
q_{\omega_1} = \omega_1, \qquad q_{\omega_1,\omega_2} = \omega_1 \omega_2+1, \qquad q_{\omega_1,\ldots,\omega_k} = \omega_k q_{\omega_1,\ldots,\omega_{k-1}} + q_{\omega_1,\ldots, \omega_{k-2}}.
$$
There are $ (\# I)^k $ summands in Equation \ref{numsum}, so we choose the largest $ k $ that our computer can manage in a short time.
For various alphabets $ I $, we chart our numerical results for $ T_k^{\pm} $, the upper and lower bounds on $ \dim_H(J_I) $, with our choice of $ k $ in each case.
For large alphabets, we take smaller $ k $ to speed up the computation.

\subsubsection{Complex continued fractions}
We begin with complex continued fractions.
Limited by computing power, we restrict to finite alphabets. 
We use the notation $ I \times Ji \subset \mathbb{N} \times \mathbb{Z}i $, for some representative choices of $ I \subset \mathbb{N} $ and $ J \subset \mathbb{Z} $.

\begin{table}[h]
	\caption{Complex continued fractions}
	\label{table1}
	\centering
	\def\arraystretch{1.2}
	\begin{tabular}{@{\extracolsep{0.5cm}} l c c c}
		\toprule
		\textbf{Alphabet} & \textbf{k}  & $ \boldsymbol{T_k^-}$ & $\boldsymbol{T_k^+}$  \\
		\midrule
		$\{2,3,4,5\} \times \{-8,\ldots,8\}i $ & 3 & 1.28512 & 1.47856 \\
		$\{2,3\} \times \{-2,\ldots,2\}i $ & 5 & 1.01264 & 1.13546 \\
		$\{3,4,5\} \times \{-8,\ldots,8\} $ & 3 & 1.13013 & 1.22647 \\
		$\{5,6,7,8\} \times \{2,3,4,5\}i $ & 4 & 0.684495 & 0.707564 \\
		$\{10,11\} \times \{10,11\}i $ & 4 & 0.255398 & 0.258506 \\
		\bottomrule\\
	\end{tabular}
\end{table}

Notice that the bounds are significantly better when the least element of $ I $ is large.

\subsubsection{Real continued fractions}
We now turn to real continued fractions, by choosing $ I \subset \mathbb{N} $.
We begin with some finite alphabets. 
Compare our Table \ref{table1} with Table 1 on pg. 16 of \cite{Hen2} and Table 3 on pg. 1442 of \cite{JP}.

\begin{table}[h]
	\caption{Real continued fractions: finite alphabets}
	\label{table1}
	\centering
	\def\arraystretch{1.2}
		\begin{tabular}{@{\extracolsep{0.5cm}} l c c c}
			\toprule
			\textbf{Alphabet} & \textbf{k}  & $ \boldsymbol{T_k^-}$ & $\boldsymbol{T_k^+}$  \\
			\midrule
			$\{1,2\}$ & 20 & 0.52417 & 0.562868 \\
			$\{2,3\}$ & 20 & 0.334398 & 0.344864 \\
			$\{5,6,7,8\}$ & 12 & 0.368563 & 0.373438 \\
			$\{10,11\}$ & 16 & 0.146668 & 0.147231 \\
			$\{100,101,102,103,104\}$ & 10 & 0.193454 & 0.193556 \\
			\bottomrule\\
		\end{tabular}
\end{table}

Again, the bounds improve when the least element of $ I $ is large.
This feature is actually shared by the Jenkinson-Pollicott algorithm \cite{JP}, but their algorithm converges exponentially fast, allowing them to calculate $ \dim_H(E_2) = 0.531280506\ldots $ to 100 decimal places, while we only manage to achieve $ 0.52417 \leq \dim_H(E_2) \leq 0.562868 $.
So for alphabets including low numbers ($ 1 $ is particularly problematic), our algorithm is woefully inferior to theirs.
Fortunately this drops off quickly; even when the lowest value is $ 2 $, the upper and lower bounds differ by $ \sim 0.01 $ for $ k=16 $.

We now turn to infinite alphabets. 
To numerically solve Equation \ref{numsum}, it is necessary to introduce a ceiling to approximate the infinite alphabet by a finite one.
For instance, for the even numbers $ I=2\mathbb{N} $, choosing the ceiling $ 1 \times 10^6 $ means that we implement the alphabet $ \{2,4,6, \ldots, 2 \times 10^6 \} $ on the computer.
Also, we can only compute first approximation $ k=1 $; this is an unfortunate restriction that a more powerful computer will not require. See Table \ref{table2}, and compare with the tables in \cite{CLU}.

\begin{table}[h]
	\caption{Real continued fractions: infinite alphabets}
	\label{table2}
	\centering
	\def\arraystretch{1.2}
	\begin{tabular}{@{\extracolsep{0.5cm}} l c c c}
		\toprule
		\textbf{Alphabet} & \textbf{Ceiling}  & $ \boldsymbol{T_1^-}$ & $\boldsymbol{T_1^+}$  \\
		\midrule
		$2\mathbb{N}$ & $ 1 \times 10^6 $ & 0.688063 & 0.856625 \\
		$3\mathbb{N}$ & $ 1 \times 10^6 $ & 0.626338 & 0.662808 \\
		$4\mathbb{N}$ & $ 1 \times 10^6 $ & 0.593185 & 0.609052 \\
		$7\mathbb{N}$ & $ 1 \times 10^6 $ & 0.544423 & 0.54838 \\
		$10\mathbb{N}$ & $ 5 \times 10^5 $ & 0.518104 & 0.519956 \\
		$100\mathbb{N}$ & $ 5 \times 10^5 $ & 0.417934 & 0.417959 \\
		\bottomrule\\
	\end{tabular}
\end{table}

As with the finite alphabets, the algorithm does substantially better when the lowest value of the alphabet is large.
Even with just the first approximation $ k=1 $ we can compute the dimension for $ I=4\mathbb{N} $ up to error $ \sim 0.01 $.

In Table \ref{table3} we list some dimension estimates for the cofinite alphabets $ F_n=\mathbb{N} \setminus \{1,\ldots,n-1\} $.
Again, we restrict to the first approximation $ k=1 $ and truncate the alphabet at the given ceiling.

\begin{table}[h]
	\caption{Real continued fractions: cofinite alphabets}
	\label{table3}
	\centering
	\def\arraystretch{1.2}
	\begin{tabular}{@{\extracolsep{0.5cm}} l c c c}
		\toprule
		\textbf{Alphabet} & \textbf{Ceiling}  & $ \boldsymbol{T_1^-}$ & $\boldsymbol{T_1^+}$  \\
		\midrule
		$F_2$ & $ 1 \times 10^6 $ & 0.791291 & 1 \\
		$F_3$ & $ 1 \times 10^6 $ & 0.759746 & 0.841966 \\
		$F_5$ & $ 1 \times 10^6 $ & 0.728387 & 0.757026 \\
		$F_{11}$ & $ 1 \times 10^6 $ & 0.692645 & 0.700367 \\
		$F_{37}$ & $ 1 \times 10^6 $ & 0.655331 & 0.656722 \\
		$F_{1000}$ & $ 1 \times 10^6 $ & 0.596801 & 0.596828 \\
		\bottomrule\\
	\end{tabular}
\end{table}

As a final note, the algorithm can be easily implemented for any given subset of $ \mathbb{N} \times \mathbb{Z}i $ with given number theoretic restrictions.
For instance, if $ I \subset \mathbb{N} $ is the powers of 2 between 16 and 1,048,576, we have $ \dim = 0.23 $ accurate to two decimal places.

\end{document}